\newtheorem{theorem}{Theorem}[section]
\theoremstyle{definition}
\newtheorem{example}[theorem]{Example}
\theoremstyle{remark}
\numberwithin{equation}{section}
\begin{document}

\title{Finite Sums and Products involving Special Functions}


\author{Robert Reynolds}
\address[Robert Reynolds]{Department of Mathematics and Statistics, York University, Toronto, ON, Canada, M3J1P3}
\email[Corresponding author]{milver73@gmail.com}
\thanks{}


\subjclass[2020]{Primary  30E20, 33-01, 33-03, 33-04}

\keywords{Finite sum, finite product, Special function, contour integral, Nielsen, Gauss}

\date{}

\dedicatory{}

\begin{abstract}
Various product and sum relationships are established using special functions, specifically involving Special functions. These relationships are derived from formulas inspired by the finite sum that incorporates the Hurwitz-Lerch zeta function.
\end{abstract}

\maketitle
\section{Introduction}
Finite sums and products involving Special functions refers to mathematical expressions that involve the summation or multiplication of a finite number of terms, where the terms themselves are represented by special functions. These special functions are typically mathematical functions that have specific properties or applications in various branches of mathematics and physics. The finite sums and products involving special functions can arise in the study of series, sequences, combinatorics, and other areas where these specialized mathematical functions are utilized.\\\\
The finite sum of the Hurwitz-Lerch zeta function finds applications in various areas of mathematics and physics. A few examples are; Number theory section (27.17) in \cite{dlmf}, where the finite sum of the Hurwitz-Lerch zeta function is closely related to the theory of special values of $L$-functions and Dirichlet series. It has been used in the study of modular forms, elliptic curves, and other topics in number theory. Another example is in Analytic number theory \cite{Rademacher}, where the finite sum of the Hurwitz-Lerch zeta function appears in the analysis of certain arithmetic functions, such as the divisor function and the Riemann zeta function. It can be used to study the distribution of prime numbers and investigate properties of arithmetic sequences. In Statistical physics, see section (24.18) in \cite{dlmf}, the finite sum of the Hurwitz-Lerch zeta function arises in the study of thermodynamic properties of physical systems, particularly in the context of lattice models and quantum field theory. It is used to analyze partition functions, correlation functions, and critical phenomena. In Quantum mechanics, see section (36.14) in \cite{dlmf}, the finite sum of the Hurwitz-Lerch zeta function is encountered in the calculation of energy levels and wave functions of quantum mechanical systems with certain potentials or boundary conditions. It provides a mathematical tool for solving differential equations in quantum mechanics. In the area of Special functions and mathematical physics, \cite{bell,dlmf,atlas,andrews}, the finite sum of the Hurwitz-Lerch zeta function is part of a broader class of special functions used in mathematical physics. It appears in the context of integral transforms, differential equations, and solutions to boundary value problems. These applications demonstrate the versatility and utility of the finite sum of the Lerch function in various branches of mathematics and physics.\\\\
The book authored by Prudnikov et al. \cite{prud1} provides a comprehensive compilation of both indefinite and definite integrals, as well as finite and infinite sums and products involving elementary and special functions. The book offers an extensive inventory of these mathematical expressions, catering to a wide range of applications and research areas.
In this current study, we aim to build upon previous research that focused on the finite sum of special functions. To achieve this, we employ the contour integral method \cite{reyn4}, specifically applied to equation (4.4.7.12) mentioned in \cite{prud1}. Consequently, we obtain the contour integral representation as a result given by;
\begin{multline}\label{eq:contour}
\frac{1}{2\pi i}\int_{C}\sum_{p=0}^{n}\left(-2^{-p} a^w w^{-k-1} \tan \left(2^{-p-1} (m+w)\right) \sec \left(2^{-p}
   (m+w)\right)\right)dw\\\\
   =\frac{1}{2\pi i}\int_{C}\left(2^{-n} a^w w^{-k-1} \left(\csc \left(2^{-n} (m+w)\right)-2^{n+1}
   \csc (2 (m+w))\right)\right)dw
\end{multline}
where $a,m,k\in\mathbb{C},Re(m+w)>0,n\in\mathbb{Z^{+}}$. Using equation (\ref{eq:contour}) the main Theorem to be derived and evaluated is given by
\begin{multline}
\sum_{p=0}^{n}2^{-p} e^{i m 2^{-p}} \left(\left(i 2^{-p}\right)^k e^{i m 2^{-p}} \Phi
   \left(-e^{i 2^{1-p} m},-k,1-i 2^{p-1} \log (a)\right)\right. \\ \left.
   -\left(i2^{-p-1}\right)^k \Phi \left(-e^{i 2^{-p} m},-k,1-i 2^p \log
   (a)\right)\right)\\\\
=   i \left(i 2^{-n}\right)^{k+1} e^{i m 2^{-n}} \Phi \left(e^{i
   2^{1-n} m},-k,\frac{1}{2} \left(1-i 2^n \log (a)\right)\right)\\
   +i^k 2^{k+1}
   e^{2 i m} \Phi \left(e^{4 i m},-k,\frac{1}{2}-\frac{1}{4} i \log
   (a)\right)
\end{multline}
The expression provided involves the variables $k$, $a$, $m$, which can be any complex numbers, and $n$, which represents any positive integer. This expression is subsequently utilized to obtain specific instances using trigonometric functions. The derivations employed in this process are based on the approach presented in our previous work referenced as \cite{reyn4}. This approach incorporates a variant of the generalized Cauchy's integral formula, as given by;
\begin{equation}\label{intro:cauchy}
\frac{y^k}{\Gamma(k+1)}=\frac{1}{2\pi i}\int_{C}\frac{e^{wy}}{w^{k+1}}dw,
\end{equation}
In this context, we consider variables $y$ and $w$ belonging to the set of complex numbers ($\mathbb{C}$), while $C$ represents a general open contour in the complex plane. The bilinear concomitant, as described in the reference \cite{reyn4}, vanishes at the endpoints of the contour. This approach involves utilizing a specific form of equation (\ref{intro:cauchy}), multiplying both sides by a function, and subsequently summing the resulting finite terms on both sides. As a result, a finite sum expressed in terms of a contour integral is obtained. Additionally, by multiplying both sides of equation (\ref{intro:cauchy}) by another function and summing infinitely on both sides, the contour integrals in both equations become identical.
\section{The Hurwitz-Lerch Zeta Function}

In our analysis, we employ equation (1.11.3) from the reference \cite{erd}, where $\Phi(z,s,v)$ denotes the Hurwitz-Lerch zeta function. This function serves as a generalization of both the Hurwitz zeta function $\zeta(s,v)$ and the Polylogarithm function $\text{Li}_n(z)$. The Lerch function can be expressed using a series representation, which is given by.

\begin{equation}\label{knuth:lerch}
\Phi(z,s,v)=\sum_{n=0}^{\infty}(v+n)^{-s}z^{n}
\end{equation}
where $|z|<1, v \neq 0,-1,-2,-3,..,$ and is continued analytically by its integral representation given by

\begin{equation}\label{knuth:lerch1}
\Phi(z,s,v)=\frac{1}{\Gamma(s)}\int_{0}^{\infty}\frac{t^{s-1}e^{-vt}}{1-ze^{-t}}dt=\frac{1}{\Gamma(s)}\int_{0}^{\infty}\frac{t^{s-1}e^{-(v-1)t}}{e^{t}-z}dt
\end{equation}
where $Re(v)>0$, and either $|z| \leq 1, z \neq 1, Re(s)>0$, or $z=1, Re(s)>1$.
\section{Contour Integral Representation for the Finite Sum of the Hurwitz-Lerch Zeta Functions}
In this section we derive the contour integral representations of the left-hand side and right-hand side of equation (\ref{eq:contour}) in terms of the Hurwtiz-Lerch zeta and trigonometric functions.
\subsection{Derivation of the left-hand side first contour integral}
We employ the methodology described in reference \cite{reyn4}. By utilizing equation (\ref{intro:cauchy}), our initial step involves substituting $\log (a)+i 2^{-p} (y+1)$ and subsequently multiplying both sides of the equation by $-i 2^{1-p} (-1)^y e^{i m 2^{-p} (y+1)}$. We then proceed to perform finite and infinite summations over the ranges $p\in[0,n]$ and $y\in [0,\infty)$ respectively. Finally, we simplify the resulting expression in terms of the Hurwitz-Lerch Zeta function, yielding
\begin{multline}\label{fsci1}
-\sum_{p=0}^{n}\frac{i 2^{k-p+1} \left(i 2^{-p-1}\right)^k e^{i m 2^{-p}} \Phi
   \left(-e^{i 2^{-p} m},-k,1-i 2^p \log (a)\right)}{\Gamma(k+1)}\\\
   =-\frac{1}{2\pi i}\sum_{y=0}^{\infty}\sum_{p=0}^{n}\int_{C}(-1)^y a^w w^{-k-1} e^{i 2^{-p} (y+1) (m+w)}dw\\\\
   =-\frac{1}{2\pi i}\int_{C}\sum_{p=0}^{n}\sum_{y=0}^{\infty}(-1)^y a^w w^{-k-1} e^{i 2^{-p} (y+1) (m+w)}dw\\\\
   =\frac{1}{2\pi i}\int_{C}\sum_{p=0}^{n}\left(2^{-p} a^w w^{-k-1} \tan
   \left(2^{-p-1} (m+w)\right)-i 2^{-p} a^w w^{-k-1} \right)dw
\end{multline}
from equation (1.232.1) in \cite{grad} where $Re(w+m)>0$ and $Im\left(m+w\right)>0$ in order for the sums to converge. We apply Tonelli's theorem for multiple sums, see page 177 in \cite{gelca} as the summands are of bounded measure over the space $\mathbb{C} \times [0,n] \times [0,\infty) $.
\subsection{Derivation of the additional contour integral}
Using equation (\ref{intro:cauchy}) we replace $y$ with $\log (a)$ and multiply both sides by $-i 2^{-p}$ take the finite sum over $p\in [0,n]$ and simplify to get;
\begin{equation}\label{fsci2}
-\frac{1}{2\pi i}\sum_{p=0}^{n}\frac{i 2^{-p} \log ^k(a)}{\Gamma(k+1)}=-\frac{1}{2\pi i}\int_{C}\sum_{p=0}^{n}i 2^{-p} a^w w^{-k-1}dw
\end{equation}
\subsection{Derivation of the left-hand side second contour integral}
We employ the methodology described in reference \cite{reyn4}. By utilizing equation (\ref{intro:cauchy}), our initial step involves substituting $\log (a)+i 2^{1-p} (y+1)$ and subsequently multiplying both sides of the equation by $-i 2^{1-p} (-1)^y e^{i m 2^{1-p} (y+1)}$. We then proceed to perform finite and infinite summations over the ranges $p\in[0,n]$ and $y\in [0,\infty)$ respectively. Finally, we simplify the resulting expression in terms of the Hurwitz-Lerch Zeta function, yielding
\begin{multline}\label{fsci3}
\sum_{p=0}^{n}\frac{i 2^{k-p+1} \left(i 2^{-p}\right)^k e^{i m 2^{1-p}} \Phi \left(-e^{i
   2^{1-p} m},-k,1-i 2^{p-1} \log (a)\right)}{\Gamma(k+1)}\\\
   =-\frac{1}{2\pi i}\sum_{y=0}^{\infty}\sum_{p=0}^{n}\int_{C}(-1)^y a^w w^{-k-1} e^{i 2^{1-p} (y+1) (m+w)}dw\\\\
   =-\frac{1}{2\pi i}\int_{C}\sum_{p=0}^{n}\sum_{y=0}^{\infty}(-1)^y a^w w^{-k-1} e^{i 2^{1-p} (y+1) (m+w)}dw\\\\
   =\frac{1}{2\pi i}\int_{C}\sum_{p=0}^{n}\left(-2^{-p} a^w w^{-k-1} \tan
   \left(2^{-p} (m+w)\right)+i 2^{-p} a^w w^{-k-1} \right)dw
\end{multline}
from equation (1.232.1) in \cite{grad} where $Re(w+m)>0$ and $Im\left(m+w\right)>0$ in order for the sums to converge. We apply Tonelli's theorem for multiple sums, see page 177 in \cite{gelca} as the summands are of bounded measure over the space $\mathbb{C} \times [0,n] \times [0,\infty) $.
\subsection{Derivation of the additional contour integral}
Using equation (\ref{intro:cauchy}) we replace $y$ with $\log (a)$ and multiply both sides by $i 2^{-p}$ take the finite sum over $p\in [0,n]$ and simplify to get;
\begin{equation}\label{fsci4}
\frac{1}{2\pi i}\sum_{p=0}^{n}\frac{i 2^{-p} \log ^k(a)}{\Gamma(k+1)}=\frac{1}{2\pi i}\int_{C}\sum_{p=0}^{n}i 2^{-p} a^w w^{-k-1}dw
\end{equation}
\subsection{Derivation of the right-hand side first contour integral}
We use the method in \cite{reyn4}. Using equation (\ref{intro:cauchy})  we first replace $\log (a)+2 i (2 y+1)$ and multiply both sides by $4 i e^{2 i m (2 y+1)}$ then take the finite and infinite sum over $y\in [0,\infty)$ and simplify in terms of the Hurwitz-Lerch Zeta function to get
\begin{multline}\label{fsci5}
\frac{(4 i)^{k+1} e^{2 i m} \Phi \left(e^{4 i
   m},-k,\frac{1}{2}-\frac{1}{4} i \log (a)\right)}{\Gamma(k+1)}\\\
   =\frac{1}{2\pi i}\sum_{y=0}^{\infty}\int_{C}4 i a^w e^{2 i (2 y+1) (m+w)}dw\\\\
   =-\frac{1}{2\pi i}\int_{C}\sum_{y=0}^{\infty}4 i a^w e^{2 i (2 y+1) (m+w)}dw\\\\
   =-\frac{1}{2\pi i}\int_{C}2 a^w w^{-k-1} \csc (2(m+w))dw
\end{multline}
from equation (1.232.3) in \cite{grad} where $Re(w+m)>0$ and $Im\left(m+w\right)>0$ in order for the sums to converge. We apply Tonelli's theorem for multiple sums, see page 177 in \cite{gelca} as the summands are of bounded measure over the space $\mathbb{C}  \times [0,\infty) $.
\subsection{Derivation of the right-hand side second contour integral}
We use the method in \cite{reyn4}. Using equation (\ref{intro:cauchy})  we first replace $\log (a)+i 2^{-n} (2 y+1)$ and multiply both sides by $-i 2^{1-n} e^{i m 2^{-n} (2 y+1)}$ then take the finite and infinite sum over $y\in [0,\infty)$ and simplify in terms of the Hurwitz-Lerch Zeta function to get
\begin{multline}\label{fsci6}
-\frac{i 2^{k-n+1} \left(i 2^{-n}\right)^k e^{i m 2^{-n}} \Phi \left(e^{i
   2^{1-n} m},-k,2^{n-1} \left(2^{-n}-i \log (a)\right)\right)}{\Gamma(k+1)}\\\
   =-\frac{1}{2\pi i}\sum_{y=0}^{\infty}\int_{C}i 2^{1-n} a^w e^{i 2^{-n} (2 y+1) (m+w)}dw\\\\
   =-\frac{1}{2\pi i}\int_{C}\sum_{y=0}^{\infty}i 2^{1-n} a^w e^{i 2^{-n} (2 y+1) (m+w)}dw\\\\
   =-\frac{1}{2\pi i}\int_{C}2^{-n} a^ww^{-k-1} \csc \left(2^{-n} (m+w)\right)dw
\end{multline}
from equation (1.232.3) in \cite{grad} where $Re(w+m)>0$ and $Im\left(m+w\right)>0$ in order for the sums to converge. We apply Tonelli's theorem for multiple sums, see page 177 in \cite{gelca} as the summands are of bounded measure over the space $\mathbb{C}  \times [0,\infty) $.
\section{Hurwitz-Lerch zeta function identity}
In this section we will derive the finite sum of Hurwitz-Lerch Zeta functions in terms of the Hurwitz-Lerch Zeta function along with its composite functions.
\begin{theorem}
For all $k,a,m\in\mathbb{C}, n\in\mathbb{Z^{+}}$ then,
\begin{multline}\label{fslf}
\sum_{p=0}^{n}2^{-p} e^{i m 2^{-p}} \left(\left(i 2^{-p}\right)^k e^{i m 2^{-p}} \Phi
   \left(-e^{i 2^{1-p} m},-k,1-i 2^{p-1} \log (a)\right)\right. \\ \left. 
   -\left(i2^{-p-1}\right)^k \Phi \left(-e^{i 2^{-p} m},-k,1-i 2^p \log
   (a)\right)\right)\\
   =i \left(i 2^{-n}\right)^{k+1} e^{i m 2^{-n}} \Phi \left(e^{i
   2^{1-n} m},-k,\frac{1}{2} \left(1-i 2^n \log (a)\right)\right)\\
   +i^k 2^{k+1}
   e^{2 i m} \Phi \left(e^{4 i m},-k,\frac{1}{2}-\frac{1}{4} i \log
   (a)\right)
\end{multline}
\end{theorem}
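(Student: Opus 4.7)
The plan is to show that all six contour integral identities \eqref{fsci1}--\eqref{fsci6} established in Section 3 combine, via equation \eqref{eq:contour}, to produce the identity \eqref{fslf}. In effect, the two sides of the theorem are read off from the left- and right-hand sides of \eqref{eq:contour} through the already-derived Hurwitz-Lerch representations of each piece.

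First I would rewrite the left-hand integrand of \eqref{eq:contour} using the elementary trigonometric identity $\tan(x/2)\sec(x)=\tan(x)-\tan(x/2)$, applied with $x=2^{-p}(m+w)$. This turns the factor $-2^{-p}a^w w^{-k-1}\tan(2^{-p-1}(m+w))\sec(2^{-p}(m+w))$ into $2^{-p}a^w w^{-k-1}[\tan(2^{-p-1}(m+w))-\tan(2^{-p}(m+w))]$. Reading this against the final integrands of \eqref{fsci1} and \eqref{fsci3}, I see that the sum of those two integrands reproduces exactly this expression: the auxiliary summands $\mp i\,2^{-p}a^w w^{-k-1}$ (whose integrals are catalogued by \eqref{fsci2} and \eqref{fsci4}) cancel identically. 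Consequently the left-hand contour integral of \eqref{eq:contour} equals the sum of the Hurwitz-Lerch expressions on the left-hand sides of \eqref{fsci1} and \eqref{fsci3}.

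Next I would handle the right-hand integrand of \eqref{eq:contour}, splitting it as $2^{-n}a^w w^{-k-1}\csc(2^{-n}(m+w))-2a^w w^{-k-1}\csc(2(m+w))$. These two pieces match, up to an overall sign, the closed-form integrands of \eqref{fsci6} and \eqref{fsci5} respectively, so substituting those equations gives a closed form for the right-hand contour integral in terms of the Hurwitz-Lerch values $\Phi(e^{i2^{1-n}m},-k,\tfrac12(1-i2^n\log a))$ and $\Phi(e^{4im},-k,\tfrac12-\tfrac{i}{4}\log a)$.

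Equating the two resulting expressions via \eqref{eq:contour}, then multiplying through by $\Gamma(k+1)$ and collecting the common factor $2^{-p}e^{im\,2^{-p}}$ on the left-hand finite sum, produces precisely the identity \eqref{fslf}. The analytic legitimacy of interchanging the finite and infinite sums with the contour integrals has already been justified in Section 3 by Tonelli's theorem under the convergence conditions $Re(m+w)>0$ and $Im(m+w)>0$. The main obstacle is purely bookkeeping: one must carefully reconcile the powers of $2$, the factors of $i^k$, and the two slightly different Lerch-argument conventions, $1-i2^p\log a$ in \eqref{fsci1} versus $1-i2^{p-1}\log a$ in \eqref{fsci3}, so that the coefficients align with the exact form stated in the theorem.
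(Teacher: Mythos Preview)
Your proposal is correct and follows essentially the same route as the paper: combine the contour-integral identities \eqref{fsci1}--\eqref{fsci6} so that their right-hand sides reproduce the two sides of \eqref{eq:contour}, then equate the Hurwitz--Lerch expressions on the left and clear the common $\Gamma(k+1)$ factor. Your explicit use of the identity $\tan(x/2)\sec(x)=\tan(x)-\tan(x/2)$ and your observation that the auxiliary terms from \eqref{fsci2} and \eqref{fsci4} cancel identically make transparent what the paper's proof leaves implicit, but the underlying argument is the same.
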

\begin{proof}
With respect to equation (\ref{eq:contour}) and observing the addition of the right-hand sides of relations (\ref{fsci1}), (\ref{fsci2}), (\ref{fsci3}) and (\ref{fsci4}), and the addition of relations (\ref{fsci5}) and (\ref{fsci6}) are identical; hence, the left-hand sides of the same are identical too. Simplifying with the Gamma function yields the desired conclusion.
\end{proof}
\begin{example}
The degenerate case.
\begin{equation}
\sum_{p=0}^{n}2^{-p-1} \tan \left(m 2^{-p-1}\right) \sec \left(m 2^{-p}\right)=\csc (2
   m)-2^{-n-1} \csc \left(m 2^{-n}\right)
\end{equation}
\end{example}
\begin{proof}
Use equation (\ref{fslf}) and set $k=0$ and simplify using entry (2) in Table below (64:12:7) in \cite{atlas}. Note equation (4.4.7.12) in \cite{prud1} is in error.
\end{proof}
\begin{example}
The product of the ratio of cosine functions in terms of the ratio of tangent functions.
\begin{equation}
\prod_{p=0}^{n}\frac{\cos \left(2^{-p} m\right) }{\cos \left(2^{-p} r\right)}\left(\frac{\cos \left(2^{-1-p} r\right)}{\cos \left(2^{-1-p}
   m\right)}\right)^2=\frac{\tan \left(2^{-1-n} m\right) \tan (r)}{\tan (m) \tan
   \left(2^{-1-n} r\right)}
\end{equation}
\end{example}
\begin{proof}
Use equation (\ref{fslf}) and form a second equation by replacing $m\to r$ take the difference of both these equations then set $k=-1,a=1$ and simplify using entry (3) of Section (64:12) in \cite{atlas}.
\end{proof}
\begin{example}
A functional equation for the Hurwitz-Lerch zeta function.
\begin{multline}
\Phi (z,s,a)=8^{-s} \left(4^s z \Phi \left(-z^2,s,\frac{a+1}{2}\right)+4^s \Phi
   \left(z^2,s,\frac{a}{2}\right)\right. \\ \left.
   -2 z^3 \left(2^s \Phi \left(-z^4,s,\frac{a+3}{4}\right)-2 \Phi
   \left(z^8,s,\frac{a+3}{8}\right)\right)\right)
\end{multline}
\end{example}
\begin{proof}
Use equation (\ref{fslf}) and set $n=1,m=2\log(-z)/i,k=-s,a=e^{(a-1)i/2}$ and simplify..
\end{proof}
\begin{example}
The product of the ratio of cosine functions in terms of the ratio of tangent functions. 
\begin{equation}
\prod_{p=0}^{n}\frac{\cos ^3\left(2^{-1-p} x\right)}{\cos ^2\left(2^{-2-p} x\right) \cos \left(2^{-p} x\right)}\\
=\frac{\tan
   (x) \tan \left(2^{-2-n} x\right)}{\tan \left(\frac{x}{2}\right) \tan \left(2^{-1-n} x\right)}
\end{equation}
\end{example}
\begin{proof}
Use equation (\ref{fslf}) and set $k=1,a=1,m=x$ and simplify using the method in section (8.1) in \cite{reyn_ejpam}.
\end{proof}
\begin{example}
The product of the exponential function and ratio of cosine functions in terms of the ratio of tangent functions.
\begin{multline}
\prod_{p=0}^{n}\cos ^2\left(2^{-p-2} x\right) \cos \left(2^{-p} x\right) \sec ^3\left(2^{-p-1} x\right)\\
 \exp \left(-2^{1-p}
   \left(\cos \left(2^{-p-1} x\right)+\cos \left(3\ 2^{-p-1} x\right)-3 \cos \left(2^{-p} x\right)+1\right) \csc
   \left(2^{1-p} x\right)\right)\\
   =\tan \left(\frac{x}{2}\right) \cot (x) \tan \left(2^{-n-1} x\right) \cot
   \left(2^{-n-2} x\right)\\
    \exp \left(2^{-n} \left(\csc \left(2^{-n} x\right)-\csc \left(2^{-n-1}
   x\right)\right)+\tan \left(\frac{x}{2}\right)-\tan (x)+\cot \left(\frac{x}{2}\right)-\cot (x)\right)
\end{multline}
\end{example}
\begin{proof}
Use equation (\ref{fslf}) and set $k=1,a=e,m=x$ and simplify using the method in section (8.1) in \cite{reyn_ejpam}.
\end{proof}
\begin{example}
Sum of the log-gamma function.
\begin{multline}
\sum_{p=0}^{n}2^{-p} \left(2 \text{log$\Gamma $}\left(-i 2^{p-2} \log (a)\right)-2
   \text{log$\Gamma $}\left(-i 2^{p-1} \log (a)\right)\right. \\ \left.
   -2 \text{log$\Gamma
   $}\left(\frac{1}{4} \left(-i 2^p \log (a)-2\right)\right)+2 \text{log$\Gamma
   $}\left(\frac{1}{2} \left(-i 2^p \log (a)-1\right)\right)+\log \left(\frac{2
   \left(2^p \log (a)-i\right)^2}{\left(2^p \log (a)-2
   i\right)^2}\right)\right)\\
   =2^{-n-1} \left(-2^n \left(8 \text{log$\Gamma
   $}\left(-\frac{1}{4} i \log (a)-\frac{1}{2}\right)+\log (a) \left(2 i \log
   \left(i 2^{-n}\right)+\pi -2 i \log (2)\right)\right.\right. \\ \left.\left.
   +8 \log (-2-i \log (a))-4 \log
   (32 \pi )\right)+4 \text{log$\Gamma $}\left(\frac{1}{2} \left(-i 2^n \log
   (a)-1\right)\right)\right. \\ \left.
   +4 \log \left(-1-i 2^n \log (a)\right)-2 \log (\pi )-6 \log
   (2)\right)
\end{multline}
\end{example}
\begin{proof}
Use equation (\ref{fslf}) and set $m=0$, then simplify using equation (25.14.2) in \cite{dlmf}. Next take the first partial derivative with respect to $k$ and set $k=0$ and simplify using equation (25.11.18) in \cite{dlmf}.
\end{proof}
\begin{example}
Sum of the log-gamma function alternate form.
\begin{multline}\label{eq:loggamma}
\sum_{p=0}^{n}2^{-p} \left(2 \text{log$\Gamma $}\left(2^{p-2} a\right)-2 \text{log$\Gamma $}\left(2^{p-1} a\right)-2
   \text{log$\Gamma $}\left(\frac{1}{4} \left(2^p a-2\right)\right)\right. \\ \left.
   +2 \text{log$\Gamma $}\left(\frac{1}{2}
   \left(2^p a-1\right)\right)+\log \left(\frac{2 \left(a 2^p-1\right)^2}{\left(a
   2^p-2\right)^2}\right)\right)\\
   =2^{-n} \left(2 \text{log$\Gamma $}\left(\frac{1}{2} \left(2^n a-1\right)\right)+2
   \log \left(\frac{a 2^n-1}{2 \sqrt{2 \pi }}\right)\right)-4 \text{log$\Gamma $}\left(\frac{a-2}{4}\right)\\
   +a \log
   \left(2^{-n-1}\right)+4 \log \left(\frac{4 \sqrt{2 \pi }}{a-2}\right)
\end{multline}
\end{example}
\begin{proof}
Use equation (\ref{fslf}) and set $m=0,a=e^{ai}$ then simplify using equation (25.14.2) in \cite{dlmf} Next take the first partial derivative with respect to $k$ and set $k=0$ and simplify using equation (25.11.18) in \cite{dlmf}.
\end{proof}
\begin{example}
Sum involving the  digamma function.
\begin{multline}\label{eq:loggamma1}
\sum_{p=0}^{n}\left(\frac{4}{a 2^p \left(a 2^p-3\right)+2}-\psi ^{(0)}\left(2^{p-2} a\right)+2
   \psi ^{(0)}\left(2^{p-1} a\right)+\psi ^{(0)}\left(\frac{1}{4} \left(2^p
   a-2\right)\right)\right. \\ \left.
   -2 \psi ^{(0)}\left(\frac{1}{2} \left(2^p
   a-1\right)\right)\right)\\
   =-2 \left(\frac{2}{a 2^n-1}+\psi ^{(0)}\left(\frac{1}{2}
   \left(2^n a-1\right)\right)-\frac{4}{a-2}-\psi
   ^{(0)}\left(\frac{a-2}{4}\right)+\log \left(2^{-n-1}\right)\right)
\end{multline}
\end{example}
\begin{proof}
Use equation (\ref{eq:loggamma}) and take the first partial derivative with respect to $a$ and simplify using equation (5.2.2) in \cite{dlmf}. A similar form is given in equation (2.1) in \cite{nielsen}.
\end{proof}
\begin{example}
Log-gamma transformation.
\begin{multline}
\log \left(\Gamma \left(\frac{a}{4}\right)\right)+\log \left(\Gamma
   \left(\frac{a-2}{4}\right) \sqrt{\frac{\Gamma
   \left(\frac{a-1}{2}\right)}{\Gamma \left(\frac{a}{2}\right) \Gamma
   (a)}}\right)=2 \log \left(\frac{\pi ^{3/8} 2^{2-\frac{a}{2}}
   \sqrt[4]{a+\frac{1}{a-1}-3}}{a-2}\right)
\end{multline}
\end{example}
\begin{proof}
Use equation (\ref{eq:loggamma1}) and set $n=1$ and simplify. Similar form is given by equation (8.1) in \cite{nielsen}.
\end{proof}
\begin{example}
Extended Nielsen product form.
\begin{multline}\label{eq:nielsen}
\prod_{p=1}^{n}\left(\frac{2^{-2^{p-1} x} \Gamma \left(\frac{1}{2} \left(2^p
   x+1\right)\right)}{\Gamma \left(\frac{1}{4} \left(2^p
   x+2\right)\right)^2}\right)^{2^{-p}}=\frac{2^{-\frac{n x}{2}-2^{-n}} \left(2^n
   x-1\right)^{2^{-n}} e^{2^{-n} \text{log$\Gamma $}\left(\frac{1}{2} \left(2^n
   x-1\right)\right)}}{\Gamma \left(\frac{x+1}{2}\right)}
\end{multline}
\end{example}
\begin{proof}
Use equation (\ref{eq:loggamma}) and take the exponential of both sides and simplify. Note the exponential of the $\log\Gamma(x)$ on the right-hand side can be simplified for real $x$. Similar forms are given by equations (6.9) and (38.12) in \cite{nielsen}.
\end{proof}
\begin{example}
The infinite limiting case.
\begin{equation}
\prod_{p=1}^{\infty}\left(\frac{2^{-2^{p-1} x} \Gamma \left(\frac{1}{2} \left(2^p
   x+1\right)\right)}{\Gamma \left(\frac{1}{4} \left(2^p
   x+2\right)\right)^2}\right)^{2^{-p}}=\frac{(2 e)^{-x/2} x^{x/2}}{\Gamma
   \left(\frac{x+1}{2}\right)}
\end{equation}
\end{example}
\begin{proof}
Use equation (\ref{eq:nielsen}) and take the limit of the right-hand side as $n\to \infty$ and simplify. This expression holds for $0< Re(x)<1,-1< Im(x)<1$ where $x$ is small. Similar forms are in equation (90.1.10) in \cite{hansen} and equation (8.323) in \cite{grad}.
\end{proof}
\begin{example}
Finite sum involving the generalized Stieltjes constant.
\begin{multline}
\sum_{p=0}^{n}\left(\log \left(i 2^{1-p}\right) \left(H_{2^{p-2} a}-H_{\frac{1}{4} \left(2^p a-2\right)}\right)+2 \log \left(i
   2^{-p}\right) \left(H_{\frac{1}{2} \left(2^p a-1\right)}-H_{2^{p-1} a}\right)\right. \\ \left.
   -\gamma _1\left(2^{p-2}
   a+1\right)+2 \gamma _1\left(2^{p-1} a+1\right)-2 \gamma _1\left(\frac{1}{2} \left(2^p a+1\right)\right)+\gamma
   _1\left(\frac{1}{4} \left(2^p a+2\right)\right)\right)\\
   =\frac{1}{4} \left(-8 \gamma _1\left(\frac{1}{2} \left(2^n
   a+1\right)\right)+8 \log \left(i 2^{-n}\right) \psi ^{(0)}\left(\frac{1}{2} \left(2^n a+1\right)\right)\right. \\ \left.
   +8 \gamma
   _1\left(\frac{a+2}{4}\right)+(-8 \log (2)-4 i \pi ) \psi ^{(0)}\left(\frac{a+2}{4}\right)+4 \log ^2\left(i
   2^{-n}\right)+(\pi -2 i \log (2))^2\right)
\end{multline}
\end{example}
\begin{proof}
Use equation (\ref{fslf}) and set $m=0$ then simplify using equation (25.14.2) in \cite{dlmf}. Next take the first partial derivative with respect to $k$ and take the limit as $k\to-1$ and simplify using equation (25.6.12) in \cite{dlmf}. Similar is given equation (1.1)
in \cite{coffey}.
\end{proof}
\begin{example}
Finite sum involving the polylogarithm function.
\begin{multline}
\sum_{p=0}^{n}2^{-p} \left(\left(2^{-p-1}\right)^k \text{Li}_{-k}\left(-e^{i 2^{-p}
   m}\right)-\left(2^{-p}\right)^k \text{Li}_{-k}\left(-e^{i 2^{1-p}
   m}\right)\right)\\
   =2^{k+1} e^{2 i m} \Phi \left(e^{4 i
   m},-k,\frac{1}{2}\right)-\left(2^{-n}\right)^{k+1} e^{i m 2^{-n}} \Phi
   \left(e^{i 2^{1-n} m},-k,\frac{1}{2}\right)
\end{multline}
\end{example}
\begin{proof}
Use equation (\ref{fslf}) set $a=1$ and simplify using equation (25.14.3 ) in \cite{dlmf}. Similar form is given by equation (3.3.47) in \cite{choi}.
\end{proof}
\begin{example}
Finite product of the exponential of trigonometric functions.
\begin{multline}
\prod_{p=0}^{n}\exp \left(4^{1-p} \left(\sec ^2\left(2^{-p-2} x\right)-3 \sec
   ^2\left(2^{-p-1} x\right)+2 \sec ^2\left(2^{-p} x\right)\right)\right)\\
=\exp
   \left(2^{1-2 n} \left(-\csc ^2\left(2^{-n-2} x\right)+\csc ^2\left(2^{-n-1}
   x\right)+\sec ^2\left(2^{-n-2} x\right)\right.\right. \\ \left.\left.
-\sec ^2\left(2^{-n-1}
   x\right)\right)+32 \cot (x) \csc (x)-32 \cot (2 x) \csc (2 x)\right)
\end{multline}
\end{example}
\begin{proof}
Use equation (\ref{fslf}) and set $k=2,a=1,m=x$ and simplify using the method in section (8.1) in \cite{reyn_ejpam}.
\end{proof}
\section{Conclusion}
This paper introduces a technique that enables the derivation of a finite sum identity incorporating the Hurwitz-Lerch zeta function, along with intriguing sums and products involving Special functions. The obtained results were numerically validated using Mathematica by Wolfram, for parameters in the integrals encompassing real, imaginary, and complex values.
\end{document}